\documentclass[11pt,a4paper]{article}
\usepackage[a4paper,top=2cm,bottom=2cm,left=2.8cm,right=2.8cm,marginparwidth=1.75cm]{geometry}

\usepackage[utf8]{inputenc}
\usepackage[english]{babel}
\usepackage{amsmath}
\usepackage{amsfonts}
\usepackage{amsthm}
\usepackage{amssymb}
\usepackage{mathtools}
\usepackage{multicol}
\usepackage{graphicx}
\usepackage{xcolor}
\graphicspath{ {pics/} }
\usepackage{caption}
\usepackage{enumerate}
\usepackage{authblk}
\usepackage{tikz}
\usepackage{float}
\usepackage[numbers, round]{natbib}
\usepackage{hyperref}

\usepackage{array} 
\usepackage{booktabs}

\definecolor{coop}{RGB}{255,220,0}
\definecolor{mixed}{RGB}{255,140,0}
\definecolor{defe}{RGB}{255,0,0}

\newtheorem{defi}{Definition}
\newtheorem{lemma}{Lemma}
\newtheorem{thm}{Theorem}
\newtheorem{coro}{Corollary}
\newtheorem{prop}{Proposition}
\newtheorem{remark}{Remark}

\theoremstyle{definition}
\newtheorem{example}{Example}

\newcommand{\T}{\Theta^{*}}
\newcommand{\Tm}{\Theta^{\mathfrak{m}}}
\newcommand{\Tp}{\Theta^{\mathfrak{p}}}
\newcommand{\Tbm}{\Theta^{\mathfrak{bm}}}
\newcommand{\TN}{\Theta^{\mathsf{NE}}}
\newcommand{\TBN}{\Theta^{\mathsf{BNE}}}
\newcommand{\M}{\mathcal{M}}

\newcommand{\diag}{\textrm{diag}}
\newcommand{\x}{\mathbf{x}}
\newcommand{\y}{\mathbf{y}}
\newcommand{\z}{\mathbf{z}}

\DeclareMathOperator{\Tr}{Tr}
\DeclareMathOperator{\rank}{rank}

\newcommand{\staricon}[3]{%
\begin{tikzpicture}[scale=1,baseline=-0.5ex,
    every node/.style={
        circle,
        draw,
        minimum size=4.8mm,
        inner sep=0pt,
        font=\scriptsize
    },
    every path/.style={thick}
]
    \node[fill=#1] (1) at (0,0) {$1$};
    \node[fill=#2] (2) at (1.4,0.4) {$2$};
    \node[fill=#3] (3) at (1.4,-0.4) {$3$};
    \draw (1)--(2);
    \draw (1)--(3);
\end{tikzpicture}%
}

\newenvironment{keywords}
{\small\noindent\textbf{Keywords:}}
{\par\bigskip}

\title{On the Structure and Stability of Boundary Mixed Steady States in Evolutionary Games on Networks}
\author[1]{Chiara Mocenni}
\author[2]{Jean Carlo Moraes\thanks{Corresponding author: jean.moraes@ufrgs.br}}
\author[2]{Matheus C. Santos}
\affil[1]{DIISM, Universit\`a di Siena, Siena, Italy}
\affil[2]{IME, Universidade Federal do Rio Grande do Sul, Porto Alegre, Brazil}
\date{\today}

\begin{document}

\maketitle

\begin{abstract}
We study steady states of evolutionary games on networks in which some players adopt pure strategies while others play mixed strategies. We refer to these configurations as boundary mixed steady states. Such states arise naturally in structured populations and have no counterpart in the classical well-mixed setting. We introduce a relaxed equilibrium notion, called boundary Nash equilibrium, in which the Nash condition is imposed only on non-pure players. In two-strategy systems, this notion characterizes boundary mixed steady states, while this correspondence breaks down in higher dimensions. The stability of these states is governed by the interaction structure among mixed players. When mixed players do not interact, the system exhibits continua of equilibria. In contrast, any nontrivial interaction generically produces instability. In particular, boundary mixed steady states that are not fully degenerate are never asymptotically stable. Degeneracies are further linked to the rank properties of the underlying interaction. These results reveal a structural instability mechanism specific to networked replicator dynamics, highlighting a qualitative gap with respect to the classical well-mixed case and showing how network topology influences the local behavior of equilibria.
\end{abstract}

\begin{keywords}
evolutionary games on networks; boundary Nash equilibrium; boundary mixed steady states; replicator dynamics; stability; non-hyperbolicity.
\end{keywords}

\section{Introduction}

Evolutionary game theory, originally developed to model animal conflict \cite{SmPr73,smith_1982}, provides a fundamental framework to describe the evolution of strategic behavior in populations, where strategy frequencies evolve according to their relative performance \cite{hofbauer_sigmund,TAJO78}. In the classical setting of well-mixed populations, the dynamics are governed by the replicator equation, which establishes a deep connection between dynamical stability and game-theoretic concepts such as Nash equilibria and evolutionarily stable strategies; see, e.g., \cite{hofbauer_sigmund, smith_1982,TAJO78,weibull1995}. In particular, for two-strategy games the dynamics can be completely characterized, while for three-strategy games a full qualitative classification of the phase portrait is available; see \cite{Carvalho1983, Zeeman1980}.

A key limitation of the classical framework is the assumption of homogeneous mixing, whereby each individual interacts equally with the entire population. In many applications, however, interactions are structured by an underlying network, and individuals interact only with a restricted set of neighbors. This has motivated the study of evolutionary game dynamics on graphs and networks; see, for instance, \cite{JaZe15,MaWeFu14,ON2006,OhNo06}. In such settings, the structure of the interaction network plays a fundamental role in shaping the qualitative behavior of the dynamics, and classical properties of the replicator equation may no longer persist.

A systematic deterministic formulation of replicator dynamics on networked populations was introduced by Madeo and Mocenni in \cite{MaMo15}, building on earlier approaches to evolutionary dynamics on graphs; see, e.g., \cite{ON2006,OhNo06}. In this framework, each vertex of a graph represents a subpopulation whose internal strategic composition evolves according to payoff differences generated by interactions with neighboring vertices. The resulting system consists of coupled nonlinear differential equations, where the evolution at each node depends on the state of its neighbors. The classical replicator equation is recovered as a special case when the interaction graph is complete and the population is homogeneous.

Several classes of equilibria arise in this setting. Interior equilibria, in which all players adopt mixed strategies, exhibit qualitative behavior that differs significantly from the classical case. In particular, for two-strategy games, equilibria that may be asymptotically stable in the well-mixed replicator dynamics can become unstable when interactions are structured by a network \cite{MaMo15}. At the other extreme, pure steady states are typically numerous, with their number growing exponentially in the size of the network. Madeo and Mocenni \cite{MaMo15} provided a condition to test whether a given pure steady state is a Nash equilibrium, but for large graphs this approach may become computationally prohibitive. More recently, \cite{MoMo24} showed that, for suitable classes of games, graph-theoretic arguments can be used to exploit the network structure and identify pure configurations that are Nash equilibria.

Between these two extremes lie equilibria in which some players adopt pure strategies
while others play mixed strategies. Such configurations arise naturally on the boundary
of the strategy space and represent heterogeneous strategic profiles across the network.
They can also be related to models with zealotry, where a subset of agents maintains fixed
strategies and influences the dynamics of the population \cite{Kitching2025, Szolnoki2016}. From a
game-theoretic perspective, these equilibria can be interpreted as boundary configurations
in which Nash-type conditions are satisfied only on a subset of players. While interior
and pure equilibria have been studied, a systematic understanding of the
stability of such boundary equilibria is still lacking.

The goal of this paper is to analyze the existence and stability of such boundary equilibria in evolutionary games on networks. Our main result shows that whenever a nontrivial subset of players adopting mixed strategies interacts among themselves, the corresponding equilibrium is necessarily unstable. More precisely, the Jacobian matrix at the equilibrium admits a block structure whose restriction to the set of mixed players is directly related to the adjacency matrix of the induced subgraph, and it has eigenvalues with both positive and negative real parts. Then, the equilibrium exhibits both stable and unstable directions.

This result highlights a fundamental contrast with the classical replicator dynamics, where mixed Nash equilibria may be asymptotically stable. In the network setting, however, the interaction structure imposes strong constraints on stability, and mixed configurations fail to be asymptotically stable. More broadly, the analysis shows that the topology of the interaction network plays a decisive role in determining the qualitative behavior of the dynamics, beyond what is predicted by payoff structure alone.


\section{Preliminaries}

We begin by recalling the general formulation of evolutionary games on networks introduced in~\cite{MaMo15}, which allows for an arbitrary number of strategies. This general framework will be useful to introduce the notion of boundary Nash equilibrium and to establish its relation with the dynamics. We will later specialize to the case of two strategies.

Consider a finite population of $N$ players indexed by $v\in\{1,\ldots,N\}$, interacting through an undirected graph $\mathcal G$ with adjacency matrix $A=[a_{v,w}]$. Each player $v$ is associated with a mixed strategy $x_v \in \Delta_M$, where $$\Delta_M = \left\{(z_1,\ldots,z_M)\in[0,1]^M\;\left|\; \sum_{i=1}^Mz_i=1 \right.\right\}$$ 
denotes the simplex of probability vectors over $M$ pure strategies.

The global state of the system is a strategy profile given by
\[
X = (x_1,\ldots,x_N)\in \Delta_M^N.
\]
where $x_v=(x_{v,1},\ldots,x_{v,M})$ is the mixed strategy vector for player $v$.

Each player $v$ is endowed with a payoff matrix $B_v \in \mathbb{R}^{M\times M}$. Given a profile $X$, we define the aggregate strategy of the neighbors of player $v$ as
\[
k_v(X) = \sum_{w=1}^N a_{v,w} x_w.
\]
The payoff of player $v$ when playing the pure strategy $s\in\{1,\ldots,M\}$ against its neighbors is
\[
p_{v,s} (X) = e_s^\top B_v\, k_v(X),
\]
where $e_s$ denotes the $s$-th canonical basis vector. Therefore, the average payoff of $v$ when playing the mixed strategy $x_v$ is given by
\[\phi_v (X) = x_v^\top B_v\, k_v(X)=\sum_{s=1}^M x_{v,s}p_{v,s} (X).\]

The evolutionary dynamics is given by the the evolutionary game equation on networks (EGN):
\begin{equation}\label{eq:EGN_general}
\dot{x}_{v,s}
= x_{v,s}\bigl(p_{v,s} (X) - \phi_v (X)\bigr),
\end{equation}
for every player $v$ and every strategy $s$.

It follows that a profile $X^*$ is a steady state if and only if, for every player $v$ and every strategy $s$,
\[
x_{v,s}^* > 0 \ \Rightarrow \ p_{v,s} (X^*) = \phi_v (X^*).
\]
In particular, strategies in the support of $x_v^*$ must yield equal payoffs, while no condition is imposed on strategies outside the support.
Therefore, the set of steady states of \eqref{eq:EGN_general} is given by
\[\T=\left\{ X \in \Delta_M^N \;|\; x_{v,s}=0 \;\text{ or } \;\ p_{v,s} (X) = \phi_v (X) ,\; \forall v = 1,\ldots, N,\; \forall s=1,\ldots,M\right\}\]

\medskip

\subsection{Nash equilibria on networks}

A Nash equilibrium is a strategy profile in which no player can improve its payoff by unilaterally deviating from its current mixed strategy. Formally, the set of Nash equilibria is
\[
\TN
=
\left\{
X\in\Delta_M^N
\;\middle|\;
x_v^\top B_v\,k_v(X)
\ge
y^\top B_v\,k_v(X),
\quad
\forall v=1,\dots,N,\ \forall y\in\Delta_M
\right\}.
\]

This condition is equivalent to requiring that, for every player $v$ and all strategies $s,r$,
\[
x_{v,s}>0
\quad\Rightarrow\quad
p_{v,s} (X)\ge p_{v,r} (X).
\]

This implies that, if $X\in\TN$, then for every player $v$
\[
x_{v,s}>0 \ \Rightarrow\ p_{v,s} (X)=\phi_v (X),
\qquad
x_{v,s}=0 \ \Rightarrow\ p_{v,s} (X)\le \phi_v (X).
\]
In particular, every Nash equilibrium is a rest point of the EGN dynamics \eqref{eq:EGN_general}.

\subsection{Boundary Nash equilibria}

We are especially interested in boundary mixed states, that is, strategy profiles in which at least one player plays a pure strategy and at least one player does not. In this setting, it is natural to introduce a weaker Nash-type notion, where the equilibrium condition is imposed only on the players that are not pure. We stress that this notion is not related to the geometric boundary of $\Delta_M^N$, but rather to the presence of both pure and non-pure players in the profile.

Given a profile $X\in \Delta_M^N$, define
\[
\mathcal{M}(X):=\{v\in\{1,\dots,N\}\;|\; x_v \text{ is not pure}\}.
\]

\begin{defi}
A profile $X^*\in \Delta_M^N$ with $|\mathcal{M}(X^*)|<N$ is called a boundary Nash equilibrium if:
\begin{itemize}
\item[i)] $\mathcal{M}(X^*)=\emptyset$, then $X^*\in \TN$;
\item[ii)] $\mathcal{M}(X^*)\neq\emptyset$, then for every player $v\in\mathcal{M}(X^*)$ and all strategies $s,r$,
\[
x_{v,s}^*>0
\quad\Rightarrow\quad
p_{v,s} (X^*)\ge p_{v,r} (X^*),
\]
\end{itemize}
We will denote the set of boundary Nash equilibria by $\TBN$.
\end{defi}

In other words, fully pure profiles are boundary Nash equilibria if and only if they are Nash equilibria. Otherwise, the Nash condition is imposed only on the non-pure players, while pure players are disregarded. This notion can be interpreted as imposing Nash-type optimality conditions only on
a subset of players, namely those adopting mixed strategies.

\begin{remark} \label{remark:equal_payoff}
    Note that is $X^*\in\TBN$ then for every $v\in \M(X^*)$ we must have $p_{v,s} (X^*)= p_{v,r} (X^*)$ for every $r,s$ such that $x_{v,s}^*,\,x_{v,r}^*>0$.
\end{remark}

The following result shows that, although boundary Nash equilibria constitute a weaker notion, they still yield rest points of the dynamics.

\begin{prop}\label{prop:bne_rest_point}
Every boundary Nash equilibrium is a steady state point of the EGN dynamics.
\end{prop}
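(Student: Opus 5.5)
The plan is to check directly that $X^*\in\TBN$ belongs to $\T$, that is, for every player $v$ and every strategy $s$, either $x^*_{v,s}=0$ or $p_{v,s}(X^*)=\phi_v(X^*)$. I will split according to the two cases in the definition of $\TBN$, and within case ii) I will further separate players $v\in\M(X^*)$ from players $v\notin\M(X^*)$.

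If $\M(X^*)=\emptyset$, then $X^*\in\TN$ by definition. The preliminaries already show that every Nash equilibrium is a rest point of \eqref{eq:EGN_general}, so there is nothing more to do. If $\M(X^*)\neq\emptyset$, I will treat the two kinds of players separately.

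\emph{Pure players.} Take $v\notin\M(X^*)$, so $x^*_v=e_{s_0}$ for some $s_0$. Then $\phi_v(X^*)=\sum_s x^*_{v,s}p_{v,s}(X^*)=p_{v,s_0}(X^*)$. The condition therefore holds trivially at $s=s_0$, and for $s\neq s_0$ we have $x^*_{v,s}=0$. No Nash-type condition is needed here, which is exactly why the definition can safely disregard pure players.

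\emph{Mixed players.} Take $v\in\M(X^*)$ and let $S_v=\{s: x^*_{v,s}>0\}$. By Remark~\ref{remark:equal_payoff}, all $p_{v,s}(X^*)$ with $s\in S_v$ share a common value $c_v$. Then
\[
\phi_v(X^*)=\sum_{s\in S_v}x^*_{v,s}\,c_v=c_v,
\]
since the weights sum to one. Hence $p_{v,s}(X^*)=\phi_v(X^*)$ for every $s$ in the support, and $x^*_{v,s}=0$ for every $s$ outside it.

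Combining the two kinds of players gives $X^*\in\T$. The argument is elementary throughout, and the only step needing any care is the averaging identity $\phi_v=c_v$ on the support for mixed players.
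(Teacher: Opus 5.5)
Your proposal is correct and follows essentially the same argument as the paper. Pure players satisfy the rest-point condition trivially since $\phi_v(X^*)=p_{v,s_0}(X^*)$, and for non-pure players Remark~\ref{remark:equal_payoff} gives a common payoff $c_v$ on the support, so $\phi_v(X^*)=c_v$ (your separate treatment of $\M(X^*)=\emptyset$ via the Nash rest-point fact is harmless but redundant, since the pure-player argument already covers that case).
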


\begin{proof}
Let $X^*\in\Delta_M^N$ be a boundary Nash equilibrium. We show that
\[
\dot x_{v,s}=0
\qquad
\forall v=1,\dots,N,\quad \forall s=1,\dots,M.
\]

Fix a player $v$. First, suppose that $x_v^*$ is pure. Then there exists $r\in\{1,\dots,M\}$ such that
\[
x_{v,r}^*=1,
\qquad
x_{v,s}^*=0 \quad \text{for all } s\neq r.
\]
Hence
\[
\phi_v (X^*)
=
\sum_{s=1}^M x_{v,s}^*\,p_{v,s} (X^*)
=
p_{v,r} (X^*).
\]
Therefore,
\[
\dot x_{v,r}
=
x_{v,r}^*\bigl(p_{v,r} (X^*)-\phi_v (X^*)\bigr)
=
1\cdot\bigl(p_{v,r} (X^*)-p_{v,r} (X^*)\bigr)=0,
\]
and for every $s\neq r$,
\[
\dot x_{v,s}
=
x_{v,s}^*\bigl(p_{v,s} (X^*)-\phi_v (X^*)\bigr)=0,
\]
since $x_{v,s}^*=0$.

Now suppose that $x_v^*$ is not pure. Since $X^*$ is a boundary Nash equilibrium, we have from Remark \ref{remark:equal_payoff} that all strategies played by player $v$ with positive probability have the same payoff. Denote this common value by $c_v$. Then
\[
\phi_v (X^*)
=
\sum_{s=1}^M x_{v,s}^*\,p_{v,s} (X^*)
=
\sum_{x_{v,s}^*>0} x_{v,s}^*\,c_v
=
c_v.
\]
Hence, if $x_{v,s}^*>0$, then
\[
\dot x_{v,s}
=
x_{v,s}^*\bigl(p_{v,s} (X^*)-\phi_v (X^*)\bigr)
=
x_{v,s}^*(c_v-c_v)=0.
\]
If instead $x_{v,s}^*=0$, then trivially $\dot x_{v,s}=0$.

Since this holds for every player $v$ and every strategy $s$, the profile $X^*$ is a rest point of the dynamics.
\end{proof}

The converse of Proposition~\ref{prop:bne_rest_point} does not hold in general. Indeed, there exist rest points of the EGN dynamics that are not boundary Nash equilibria, as it can can be seen with the following example.

\begin{example}
Consider $N=2$, $M=3$, with adjacency matrix
\[
A=\begin{pmatrix}0&1\\1&0\end{pmatrix},
\qquad
B_1=B_2=
\begin{pmatrix}
1&0&0\\
1&0&0\\
2&0&0
\end{pmatrix}.
\]
Let $X^*=(x_1^*,x_2^*)$, where $x_1^*=(1,0,0)$ and $x_2^*=(\tfrac12,\tfrac12,0)$. Then $x_{1,2}^*=x_{1,3}^*=x_{2,3}^*=0$, and therefore the corresponding components of the vector field vanish trivially. Moreover, strategy $1$ is the only strategy in the support of player $1$, and it satisfies $p_{1,1}(X^*)=\phi_1(X^*)$. For player $2$, the support is $\{1,2\}$, and both strategies satisfy $p_{2,1}(X^*)=p_{2,2}(X^*)=\phi_2(X^*)$. Hence $\dot{x}_{v,s}=0$ for every $v$ and $s$, so $X^*$ is a rest point of the dynamics. 

However, $X^*$ is not a boundary Nash equilibrium, since player $2$ assigns positive probability only to strategies $1$ and $2$, while strategy $3$ yields the strictly larger payoff $p_{2,3}(X^*)=2>\phi_2(X^*)=1$. Thus player $2$ has a profitable unilateral deviation, and the boundary Nash condition fails. 
\end{example}

\medskip

\section{Games with 2 Strategies}
We now turn to the particular case of two strategies. Setting $M=2$, we denote the pure strategies by cooperation ($C$) and defection ($D$). For each player $v$, the mixed strategy vector $x_v=(x_{v,1},x_{v,2})\in\Delta_2$ can be identified with its first coordinate $x_{v,1}\in[0,1]$, since the simplex relation yields $x_{v,2}=1-x_{v,1}$. Thus, $x_{v,1}$ represents the fraction of agents playing $C$, while $1-x_{v,1}$ represents the fraction playing $D$. For notational convenience, we shall use a slight abuse of notation and therefore write $x_v$ instead of $x_{v,1}$.

Accordingly, the global state becomes
\[
\mathbf{x} = (x_1,\ldots,x_N)\in[0,1]^N.
\]

Let $B_v \in \mathbb{R}^{2\times 2}$ be the payoff matrix of player $v$. 
As is standard in evolutionary game theory, we can assume without loss 
of generality that $B_v$ is diagonal, namely
\[
B_v =
\begin{pmatrix}
\sigma_{1,v} & 0 \\
0 & \sigma_{2,v}
\end{pmatrix},
\]
since the dynamics depends only on payoff differences, which are invariant 
under the addition of constants to the columns of the payoff matrix.

For a given profile $\mathbf{x}$, the aggregate strategy of the neighbors 
of player $v$ is
\[
k_v(\mathbf{x}) = \sum_{w=1}^N a_{v,w} 
\begin{pmatrix}
x_w \\ 1-x_w
\end{pmatrix}.
\]

Therefore, since we only need the equations for the first strategy in each vertex, we calculate
\begin{align}
p_{v,1} (\x) - \phi_v (\x) &=\begin{pmatrix}
1-x_v & x_v-1
\end{pmatrix} B_v k_v(\mathbf{x}) \nonumber \\
&= (1-x_v)\sum_{w=1}^Na_{v,w}(\sigma_{v,1}x_w-\sigma_{v,2}(1-x_w))\nonumber \\  
&=(1-x_v)\left((\sigma_{v,1}+\sigma_{v,2})\sum_{w=1}^Na_{v,w}x_w-d_v\sigma_{v,2}  \right). \label{eq:pv-phiv-M2}
\end{align}

Substituting \eqref{eq:pv-phiv-M2} into the general dynamics~\eqref{eq:EGN_general}, we obtain that the cooperation level $x_v$ in each vertex $v$ satisfies
\begin{equation}\label{eq:EGN}
\dot{x}_v
=
x_v(1-x_v) f_v(\mathbf{x}),
\qquad v=1,\ldots,N.
\end{equation}
where 
\begin{equation}\label{eq:f_v}
    f_v(\mathbf{x})=(\sigma_{v,1}+\sigma_{v,2})\sum_{w=1}^Na_{v,w}x_w-d_v\sigma_{v,2}
\qquad v=1,\ldots,N.
\end{equation}

The hypercube $[0,1]^N$ is forward invariant under this dynamics, and each 
coordinate face $x_v=0$ and $x_v=1$ is invariant. 

From \eqref{eq:EGN} we see that the set of steady states can be represented by
\begin{equation}\label{eq:fixed_point}
\T =\left\{\mathbf{x}\in[0,1]^N \;|\;\forall v, \quad x_v=0
\quad\text{or}\quad
x_v=1
\quad\text{or}\quad
f_v(\mathbf{x})=0 \right\}.
\end{equation}

It can be decomposed as
\[
\T=\Tp\cup\Tm\cup\Tbm,\]
where:
\begin{itemize}
\item $\Tp := \{\mathbf{x}\in[0,1]^N : x_v \in \{0,1\}, \ \forall v\}$ is the set of pure states;
\item $\Tm := \{\mathbf{x}\in(0,1)^N : f_v(\mathbf{x})=0,\ \forall v\}$ is the set of fully mixed states;
\item $\Tbm := \T \setminus (\Tp \cup \Tm)$ is the set of boundary mixed steady states.
\end{itemize}

Hence, $\Tbm$ consists of those steady states for which at least one player is pure and at least one player is not pure. The aim of this work is to analyze the structure and stability of the boundary mixed steady states $\Tbm$.

In the present case, the conditions for a state $\x\in[0,1]^N$ to be a Nash equilibrium can be stated in terms of the function $f_v$ given in \eqref{eq:f_v}: the set of Nash equilibria is 
\[
\TN = \left\{\x\in[0,1]^N\;|\;\forall v, \;
(x_v=0 \ \text{and}\ f_v(\mathbf{x})\le 0) \;\mbox{ or }
\;
(x_v=1 \ \text{and}\ f_v(\mathbf{x})\ge 0) \;\mbox{ or }
\;
f_v(\mathbf{x})=0
\right\}\]

In particular, if $\x$ is a Nash equilibrium and $x_v\in(0,1)$ for some $v$, then necessarily $f_v(\mathbf{x})=0$. 
This implies that every fully mixed Nash equilibrium must satisfy
\[
f_v(\mathbf{x})=0
\qquad \forall v .
\]

Analogously, the boundary Nash equililbria can be characterized by 
\begin{equation}\label{eq:boundary_nash_2}
\TBN = \left\{ \x\in[0,1]^N\;|\; \x\in\TN\cap\Tp \;\;\mbox{ or }\;\; f_v(\x)=0, \forall v\in \mathcal{M}(\x)  \right\}
\end{equation}

Note that every Nash equilibrium of the game is a steady state of 
the EGN dynamics. The converse is not true since points in $\Tp$ may not be NE. It is well known that for $M=2, \x \in \Tm$ then $\x \in \TN$. And it is also not true that $ \x \in \Tbm$ will imply $\TN$,  since boundary steady states may fail to satisfy the conditions on $f_v$ in the vertices $v$ where $x_v\in\{0,1\}$. However we can prove that if $ \x \in \Tbm$ then it is a boundary Nash equilibrium.




\begin{thm}
Assume $M=2$. Let $\x^*\in [0,1]^N$ such that $0<|\M(x^*)|<N$. Then $\x^*$ is a steady state for the EGN dynamics if and only if it is a boundary Nash equilibrium.
\end{thm}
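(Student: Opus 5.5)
The plan is to prove the two implications separately. With $0<|\M(\x^*)|<N$, case i) of the definition of $\TBN$ never applies. So by \eqref{eq:boundary_nash_2}, $\x^*$ is a boundary Nash equilibrium exactly when $f_v(\x^*)=0$ for every $v\in\M(\x^*)$. It therefore suffices to show that, under this hypothesis on $\M(\x^*)$, $\x^*\in\T$ is equivalent to $f_v(\x^*)=0$ for all $v\in\M(\x^*)$.

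The implication ``boundary Nash equilibrium $\Rightarrow$ steady state'' is already Proposition~\ref{prop:bne_rest_point}, specialized to $M=2$. For a direct check in the reduced coordinates: if $v\notin\M(\x^*)$ then $x_v^*\in\{0,1\}$, so $x_v^*(1-x_v^*)=0$ and $\dot x_v=0$ by \eqref{eq:EGN}. If $v\in\M(\x^*)$ then $f_v(\x^*)=0$ and again $\dot x_v=0$.

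For the converse, let $\x^*\in\T$ and take $v\in\M(\x^*)$. With $M=2$, ``$x_v$ not pure'' means $x_v^*\in(0,1)$. By the description \eqref{eq:fixed_point} of $\T$, one of $x_v^*=0$, $x_v^*=1$, $f_v(\x^*)=0$ must hold, and the first two are excluded, so $f_v(\x^*)=0$. Hence $\x^*$ satisfies the condition in \eqref{eq:boundary_nash_2}. To be safe I would also check this against the original definition: by \eqref{eq:pv-phiv-M2}, $p_{v,1}-\phi_v=(1-x_v)f_v$, and $p_{v,2}-\phi_v=-x_v f_v$ since $\phi_v=x_vp_{v,1}+(1-x_v)p_{v,2}$. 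So $p_{v,1}(\x^*)-p_{v,2}(\x^*)=f_v(\x^*)$. When $f_v(\x^*)=0$, both strategies, which are both in the support, give equal payoffs, and the inequality $p_{v,s}\ge p_{v,r}$ holds for all $s,r$.

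There is no real obstacle here. The one point to stress is why $M=2$ matters, in contrast with the example with $M=3$. When there are only two strategies, a non-pure player necessarily has full support, so no strategy lies outside the support that could give a strictly larger payoff. The equal-payoff condition on the support is therefore the same as the Nash condition for that player. The only care needed is to confirm that $p_{v,1}-p_{v,2}$ equals $f_v$ exactly, using the identity above, and not merely up to a factor that could vanish.
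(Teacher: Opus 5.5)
Your proof is correct and follows the same route as the paper. Both arguments use the characterization \eqref{eq:boundary_nash_2} to reduce the boundary Nash condition to $f_v(\x^*)=0$ for $v\in\M(\x^*)$, and both observe that the steady-state condition holds automatically at pure vertices and reduces to exactly this equation at mixed ones. Your extra check that $p_{v,1}-p_{v,2}=f_v$ exactly is a worthwhile addition, since the paper asserts \eqref{eq:boundary_nash_2} only ``analogously'' and does not derive it from the original definition.
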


\begin{proof}
By hypothesis, $\x^*$ contains at least one pure component and at least one mixed component. 

For every vertex $v$ such that $x_v^*\in\{0,1\}$, the steady-state condition in \eqref{eq:fixed_point} is automatically satisfied. Moreover, the characterization of boundary Nash equilibria given in \eqref{eq:boundary_nash_2} imposes no additional condition on these components.

On the other hand, if $x_v^*\in(0,1)$, then the steady-state condition \eqref{eq:fixed_point} reduces to $f_v(\x)=0$, which is  the same requirement appearing in the characterization of boundary Nash equilibria in \eqref{eq:boundary_nash_2}. Therefore, for every mixed component, the conditions for being a steady state and for being a boundary Nash equilibrium coincide.

It follows that $\x^*$ is a steady state if and only if it satisfies the characterization of a boundary Nash equilibrium in \eqref{eq:boundary_nash_2}.
\end{proof}

\medskip 
\subsection{Boundary mixed steady states}
Let us assume that $\Tbm$ is non empty and fix $\x^*\in\Tbm$. Let us define
\[m=|\M(\x^*)|
.\]
So $m$ is the number of players $v$ playing mixed strategies in the strategy profile $\x^*$. We can relabeled the players under some permutation and obtain that $\x^*$ can be written as
\begin{equation}
 \x^* =( x_1,...,x_{m},x_{m+1},...,x_{N}), \label{eq:pm}  
\end{equation}
\noindent 
where $x_i\in(0,1)$ for $1\leq i \leq m$ and $x_i\in\{0,1\}$ for $m+1\leq i \leq N$. Therefore, $m$ is the number of players playing mixed strategies and let us call $p=N-m$ the number of players playing pure strategies in the profile $\x^*$. Doing such relabeling does not change the spectrum of the adjacency matrix since the new matrix $\Tilde{A}$ is similar to $A$, $\Tilde{A}=PAP^{-1}$, where $P$ is a permutation matrix. Of course, this relabeling depends on the point $\x\in\Tbm$. Throughout this section, we assume that the players have been relabeled in such a way as to have the point $\x^*$ written in this form.


The subset of strategies profiles that contain all points of the form \eqref{eq:pm} will be defined as
$$\mathcal{S}_{m,p}:=(0,1)^{m} \times \{0,1\}^{p}.$$

In order to shorten the notation, we will write a point $\x \in \mathcal{S}_{m,p}$ as $\x=(\x^{\textbf{m}},\x^{\textbf{p}})$, where $\x^{\textbf{m}} \in (0,1)^m$ and $\x^{\textbf{p}} \in \{0,1\}^p$.

Given a point $\x =(x_1,\ldots,x_N)=(\x^{\textbf{m}},\x^{\textbf{p}}) \in \mathcal{S}_{m,p}$, we will define the subset generated by $k$-th player, $k \leq m$, as

\[\mathcal{S}^k_{m,p}(\x):=\Big\{(x_1,\ldots,x_{k-1},t,x_{k+1},\ldots,x_{m},x_{m+1},\ldots,x_N) \,\, \Big| \,\, t \in (0,1) \Big\}
\]
and also
\[\mathcal{S}_{m,p}(\x):=\bigcup_{k=1}^m\mathcal{S}^k_{m,p}(\x)
.\]


Let us call $M$ and $P$ the subgraphs of players who are playing mixed and pure strategies in $\x$, respectively. The adjacency matrix $A$ of the graph is of the form,

\begin{equation}
    A= \begin{pmatrix}
        A_{MM} & A_{MP} \\
        A_{PM}  & A_{PP}
    \end{pmatrix}\label{eq:adj_block}
\end{equation} 

\noindent where 
\begin{itemize}
    \item[] $A_{MM}=\Big[a^{MM}_{v,w}\Big]$, with $a^{MM}_{v,w}=a_{v,w}$ for $1 \leq v,w \leq m$, is the adjacency matrix of the subgraph $M$.

\item[] $A_{MP}=\Big[a^{MP}_{v,w}\Big]$, with $a^{MP}_{v,w}=a_{v,w+m}$ for $1 \leq v \leq m$ and $1 \leq w \leq p$

\item[] $A_{PM}=\Big[a^{PM}_{v,w}\Big]$, with $a^{PM}_{v,w}=a_{v+m,w}$ for $1 \leq v \leq p$ and $1 \leq w \leq m$

\item[]$A_{PP}=\Big[a^{PP}_{v,w}\Big]$, with $a^{PP}_{v,w}=a_{v+m,w+m}$ for $1 \leq v,w \leq p$, is the adjacency matrix of the subgraph $P$.
\end{itemize}

We have that $A_{MP}=(A_{PM})^T$ and these block matrices contains the locations of the edges between players playing mixed strategies and the ones playing pure strategies.

In the next proposition we will prove that given two steady states of $\mathcal{S}_{m,p}$ with the same pure strategies components, then the segment line connecting them will be made of steady states as well.

\begin{prop} Let $\x, \y \in \mathcal{S}_{m,p}$ be such that $\x=(\x^{\textbf{m}},\x^{\textbf{p}}) $, $\y=(\y^{\textbf{m}},\x^{\textbf{p}}) $, with $\x^{\textbf{m}},\y^{\textbf{m}} \in (0,1)^m $ and $\x^{\textbf{p}} \in \{0,1\}^p$. If $\x, \y \in \Tbm$, then $(1-t)\x+t\y \in \Tbm$, $\forall \, t \, \in [0,1]$.
\end{prop}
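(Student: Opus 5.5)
The argument rests on the fact that each $f_v$ in \eqref{eq:f_v} is an affine function of $\x$, together with the convexity of the set $(0,1)^m\times\{\x^{\textbf{p}}\}$. Set $\z(t)=(1-t)\x+t\y$ for $t\in[0,1]$. Since $\x$ and $\y$ share the pure block $\x^{\textbf{p}}$, we have
\[
\z(t)=\bigl((1-t)\x^{\textbf{m}}+t\y^{\textbf{m}},\,\x^{\textbf{p}}\bigr).
\]
Each coordinate of the first block is a convex combination of two numbers in $(0,1)$, so it lies in $(0,1)$. Hence $\z(t)\in\mathcal{S}_{m,p}$ for every $t\in[0,1]$. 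In particular, $\M(\z(t))=\{1,\dots,m\}$, so $0<|\M(\z(t))|<N$: the first $m$ players are mixed and the remaining $p\ge 1$ players are pure.

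Next, we characterise the steady-state property. By \eqref{eq:fixed_point}, a point of $\mathcal{S}_{m,p}$ is a steady state exactly when $f_v=0$ for every $v\le m$, since the pure coordinates satisfy the steady-state condition automatically. Equivalently, by the preceding Theorem and \eqref{eq:boundary_nash_2}, it is a boundary Nash equilibrium with that mixed set. Because $\x,\y\in\Tbm$ and their first $m$ coordinates lie in $(0,1)$, we obtain $f_v(\x)=f_v(\y)=0$ for all $v=1,\dots,m$.

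The key step is the affineness of $f_v$. Write $f_v(\x)=c_v\sum_w a_{v,w}x_w-d_v\sigma_{v,2}$ with $c_v=\sigma_{v,1}+\sigma_{v,2}$. Then
\[
f_v(\z(t))=(1-t)f_v(\x)+t f_v(\y)=0\qquad\text{for all } v\le m,
\]
because the constant term is reproduced by $(1-t)+t=1$. Therefore every mixed coordinate of $\z(t)$ satisfies $f_v=0$ and every pure coordinate lies in $\{0,1\}$. Hence $\z(t)\in\T$.

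It remains to show $\z(t)\notin\Tp\cup\Tm$. It is not in $\Tp$ because $m\ge1$ of its coordinates lie in $(0,1)$. It is not in $\Tm$ because $p\ge1$ of its coordinates lie in $\{0,1\}$. Here $m\ge 1$ and $p\ge 1$ hold since $\x\in\Tbm\cap\mathcal{S}_{m,p}$. Thus $\z(t)\in\Tbm$.

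There is no real obstacle. The only point needing care is that the mixed block of the segment stays strictly inside $(0,1)^m$, which is what prevents the endpoints from escaping to pure faces. This follows from the convexity of the open cube, so the argument works at $t=0,1$ as well as in the interior.
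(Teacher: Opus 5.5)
Your proof is correct and follows essentially the same route as the paper. The segment stays in $\mathcal{S}_{m,p}$ with the pure block fixed, and the affineness of $f_v$ gives $f_v(\z)=(1-t)f_v(\x)+t f_v(\y)=0$ for every mixed player $v\le m$; your explicit check that $\z\notin\Tp\cup\Tm$ (using $m,p\ge 1$) spells out a step the paper leaves implicit.
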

\begin{proof}
    Suppose that $\x, \y \in \Tbm$ and let $t \in [0,1]$. Clearly, the point ${ \bf z}=(1-t)\x+t\y$ is in $\mathcal{S}_{m,p}$. In order to show that $\z$ is in $\Tbm$, we just need to show that its coordinates satisfy the last condition in \eqref{eq:fixed_point}. For $m+1  \leq v \leq N$, we have $z_v=(1-t)x_{v}+tx_{v}=x_v \in \{ 0,1\}$, by hypothesis. Now, if $1 \leq v \leq m$, then $f_v(\x)=0=f_v(\y)$.
Thus,
\begin{align*}
f_v(\z)&= (\sigma_{1,v}+\sigma_{2,v}) \sum_{w=1}^{N} a_{v,w}z_w-d_v\sigma_{2,v} \\
&=(\sigma_{1,v}+\sigma_{2,v})\sum_{w=1}^{N} a_{v,w}\big((1-t)x_{w}+ty_{w} \big)-(1-t+t)d_v\sigma_{2,v}\\ 
&=(1-t)\left((\sigma_{1,v}+\sigma_{2,v}) \sum_{w=1}^{N} a_{v,w}x_{w}-d_v\sigma_{2,v}\right)+t\left((\sigma_{1,v}+\sigma_{2,v}) \sum_{w=1}^{N} a_{v,w}y_{w}-d_v\sigma_{2,v}\right)\\ 
&=(1-t)f_v(\x)+tf_v(\y)\\&=0
\end{align*}
Therefore, for all $t \in [0,1], t\x + (1-t)\y \in \Tbm.  $
\end{proof}

The next result is a technical result of Linear Algebra. This result might be well-known, but for completeness, we offer a proof of it.

\begin{lemma}\label{lemma:diagonal_matrix}
Let $A$ be symmetric $n \times n$ matrix and $D=\diag(d_i)_{i=1}^n$ a diagonal $n \times n$ matrix with $d_i>0$ for all $i$. The eigenvalues of $DA$ are real numbers and the number of negative, positive and zero eigenvalues are the same as in $A$.
\end{lemma}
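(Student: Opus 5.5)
The plan is to reduce $DA$ to a symmetric matrix by a similarity transformation and then apply Sylvester's law of inertia. Since every $d_i>0$, the diagonal matrix $D^{1/2}=\diag(\sqrt{d_i})_{i=1}^n$ is well defined and invertible. We have
\[
D^{-1/2}(DA)D^{1/2}=D^{1/2}AD^{1/2}=:S .
\]
So $DA$ is similar to $S$, and $S$ is symmetric because $S^\top=D^{1/2}A^\top D^{1/2}=S$. Similar matrices have the same characteristic polynomial, so $DA$ and $S$ have the same eigenvalues with the same algebraic multiplicities. By the spectral theorem all eigenvalues of $S$ are real; hence so are those of $DA$. This settles the first claim.

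For the count of signs, observe that $S=(D^{1/2})^\top A\, D^{1/2}$ is congruent to $A$ through the invertible matrix $D^{1/2}$. Sylvester's law of inertia then says that $S$ and $A$ have the same numbers of positive, negative and zero eigenvalues, counted with multiplicity. Combined with the similarity between $DA$ and $S$, the same holds for $DA$.

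If we want the argument to be self-contained instead of citing Sylvester, the only nontrivial step is the inertia invariance. I would prove it by the standard subspace argument. Let $n_+(\cdot)$ denote the largest dimension of a subspace on which the quadratic form $\x\mapsto \x^\top(\cdot)\x$ is positive definite; for a symmetric matrix this equals its number of positive eigenvalues, by diagonalising orthonormally. Since $\x^\top S\x=(D^{1/2}\x)^\top A(D^{1/2}\x)$ and $D^{1/2}$ is a bijection on subspaces that preserves dimension, we get $n_+(S)=n_+(A)$. The same argument gives $n_-(S)=n_-(A)$. The zero counts agree because $\rank S=\rank A$, and then $n_0=n-\rank$.
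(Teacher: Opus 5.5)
Your proof is correct and follows the paper's argument. The paper likewise sets $H=\diag(\sqrt{d_i})$, writes $DA=H(HAH)H^{-1}$ to get real eigenvalues from the symmetric matrix $HAH$, and then applies Sylvester's law of inertia to the congruent pair $HAH$, $A$; your subspace argument is a valid self-contained substitute for citing Sylvester.
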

 
\begin{proof}
Since $D=\diag(d_i)$ com $d_i>0$ for all $i$, let $H=\diag (\sqrt{d_i})$ e $\widetilde{A}=HAH$. Then
\[DA=HHAHH^{-1}=H\widetilde{A}H^{-1}.\]
Therefore, the eigenvalues of $DA$ and $\widetilde{A}$ are the same. They are real because $\widetilde{A}$ is symmetric. Finally, since $\widetilde{A}$ and $A$ are congruent, by the Silvester's Law of Inertia, they have the same number of positive, negative and zero eigenvalues.
\end{proof}

In the main result, it will be important that the matrix $A_{MM}$ is not identically zero. In order to be able to make this assumption, we will separately study the case where $A_{MM}$ is identically zero. 

Let us define the number of neighbors that cooperates and the number of neighbors that defects of a given player $v$.

\begin{defi} Given a vertices $v$ we define the set of neighbors of $v$ that cooperates as 
$$\mathcal{C}_v =\{ w \in V : a_{vw} \neq 0 \; \text{ and } \; x_w=1 \}.$$
Analogously, the set of neighbors of $v$ that defects is: 
$$\mathcal{D}_v = \{ w \in V : a_{vw} \neq 0 \; \text{ and } \; x_w=0 \}.$$
We also define $N_{v,1}=|\mathcal{C}_v|$ and $N_{v,0}=|\mathcal{D}_v|$.
\end{defi}

Note that if a vertices $v$ is connected with only vertices that play a pure strategy, then $d_v=N_{v,1}+N_{v,0}$.

\begin{lemma}
If a vertex $k \in M$ is isolated in $M$, then
\begin{enumerate}[i)]
    \item $\mathcal{S}^{k}_{m,p}(\x^*) \subset \Tbm$.
    \item $\sigma_{1,k}N_{k,1}=\sigma_{2,k} N_{k,0} $.
\end{enumerate}

\end{lemma}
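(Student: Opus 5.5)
The plan is to use that $f_k$ depends only on the coordinates of the neighbours of $k$. If $k\in M$ is isolated in $M$, then $a_{k,w}=0$ for every mixed vertex $w$, so every neighbour of $k$ is pure. Hence
\[
\sum_{w=1}^N a_{k,w}x_w=\sum_{w\in\mathcal{C}_k}1+\sum_{w\in\mathcal{D}_k}0=N_{k,1},
\qquad d_k=N_{k,1}+N_{k,0},
\]
where the second identity is the observation made just after the definition of $\mathcal{C}_v$ and $\mathcal{D}_v$. I would record this first. It implies that $f_k(\x)$ is the same for every $\x$ whose pure components agree with those of $\x^*$.

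I would prove ii) before i). Since $\x^*\in\Tbm$ and $x_k^*\in(0,1)$, the characterization \eqref{eq:fixed_point} gives $f_k(\x^*)=0$. Substituting the identities above into \eqref{eq:f_v},
\[
0=(\sigma_{1,k}+\sigma_{2,k})N_{k,1}-(N_{k,1}+N_{k,0})\sigma_{2,k}=\sigma_{1,k}N_{k,1}-\sigma_{2,k}N_{k,0},
\]
which is ii).

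For i), take $\y\in\mathcal{S}^k_{m,p}(\x^*)$, that is, $\x^*$ with its $k$-th coordinate replaced by some $t\in(0,1)$. I would check the conditions of \eqref{eq:fixed_point} coordinate by coordinate.
\begin{itemize}
\item For a pure vertex, $y_v=x_v^*\in\{0,1\}$, so the condition holds.
\item For $v=k$, the value $f_k(\y)$ does not involve $y_k$, since $a_{k,k}=0$ for a simple graph. Every neighbour of $k$ is pure and unchanged, so $f_k(\y)=f_k(\x^*)=0$.
\item For a mixed vertex $v\neq k$, we have $a_{v,k}=a_{k,v}=0$ by symmetry of $A$ and the isolation of $k$ in $M$. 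Thus $f_v(\y)$ does not depend on $y_k$, and $f_v(\y)=f_v(\x^*)=0$.
\end{itemize}
So $\y\in\T$. Moreover $\y$ has the same pure and mixed vertices as $\x^*$, with $0<m<N$, so $\y\notin\Tp\cup\Tm$, and therefore $\y\in\Tbm$.

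There is no real obstacle here. The only points needing care are to state explicitly that $a_{k,k}=0$ (no self-loops, implicit in the adjacency-matrix setting) and that $A$ is symmetric, since these give the independence of the $f_v$ from $x_k$. Alternatively, the earlier convexity proposition could be invoked, but the direct verification is simpler and covers all $t\in(0,1)$ at once.
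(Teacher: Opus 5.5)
Your proof is correct and follows the paper's argument. For ii), you substitute $\sum_w a_{k,w}x_w^*=N_{k,1}$ and $d_k=N_{k,1}+N_{k,0}$ into $f_k(\x^*)=0$; for i), you use that no $f_v$ with $v\in M$ depends on $x_k$, since $a_{k,k}=0$ and $a_{v,k}=0$ for mixed $v$. Your explicit check that the perturbed point stays outside $\Tp\cup\Tm$ is a small detail the paper leaves implicit.
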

\begin{proof}
Suppose that a node $k \in M$ is not connected to any other node $w \in M$, then $a_{k,w}=a_{w,k}=0$ for $1 \leq w \leq m$. 

For item $i)$, according to equation \eqref{eq:fixed_point}, we have $f_v(\x^*)=0$, for $1 \leq v \leq m$. Explicitly, using \eqref{eq:pm},
\begin{align*}
  0 =f_v(\x^* ) = (\sigma_{1,v}+\sigma_{2,v})\left(\sum_{w=1}^{m} a_{v,w}x_{w}+\sum_{w=m+1}^{N} a_{v,w}x_{w}\right)  - d_v\sigma_{2,v}
\end{align*}
Note that the first sum above thus does not depend on the component $x_k$ since $a_{v,k}=0$.Thus, for any $1
\leq v \leq m$ we find that any $\z \in \mathcal{S}^{k}_{m,p}(\x^*) $ satisfies $f_v(\z )=f_v(\x^* )=0$, since $\z \in \mathcal{S}^{k}_{m,p}(\x^*)$ and $\x^*$ only differs in the $k$-th component.
For $v \geq m+1$, we already have that $z_v\in \{0,1\}$ for any $\z \in \mathcal{S}^{k}_{m,p}(\x^*)$. Therefore, $\mathcal{S}^{k}_{m,p}(\x^*)\subseteq \Tbm$ by \eqref{eq:fixed_point}.

For item $ii)$, note that
\begin{align*}
0 &= f_k(\x^*) \\
&=(\sigma_{1,k}+\sigma_{2,k})\left(\sum_{w=m+1}^{N} a_{v,w}x_{w}\right)  - d_k\sigma_{2,k}\\  
&=(\sigma_{1,k}+\sigma_{2,k})N_{k,1}  - (N_{k,1}+N_{k,0})\sigma_{2,k}\\  
&=\sigma_{1,k}N_{k,1}-\sigma_{2,k}N_{k,0}
\end{align*}
where we used that $k$ is isolated in $M$ for the second and third equalities.
\end{proof}

The previous result guarantees that if a player who plays mixed strategy is not connected to other players who also plays mixed strategy, then the existence of a boundary mixed steady state implies that it is not an isolated steady state.



In the case where the graph of players playing mixed strategies is totally disconnected, we have:

\begin{coro}\label{cor.AMM=0}
If $A_{MM}=0$, then one of the following holds:
\begin{enumerate}
    \item $\Tbm = \emptyset $
    \item For all $\x\in \Tbm$, we have $\mathcal{S}_{m,p}(\x) \subset \Tbm$ and 
    $ \sigma_{1,v}N_{v,1}=\sigma_{2,v}N_{v,0}$, $\forall v \in M$.
\end{enumerate}
\end{coro}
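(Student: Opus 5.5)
The plan is to reduce the corollary to the preceding lemma on isolated mixed vertices, applied at every point of $\Tbm$. Whether $A_{MM}=0$ depends on which vertices are mixed, so I read the hypothesis as: for each $\x\in\Tbm$, after the relabeling \eqref{eq:pm} adapted to $\x$, the block $A_{MM}$ associated with $\M(\x)$ vanishes. The dichotomy is then trivial at the top level. Either $\Tbm=\emptyset$, and alternative 1 holds, or $\Tbm\neq\emptyset$, and we must show alternative 2 for an arbitrary $\x\in\Tbm$.

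So fix $\x\in\Tbm$ and relabel the players so that $\x\in\mathcal{S}_{m,p}$ with $m=|\M(\x)|$. The hypothesis $A_{MM}=0$ says $a_{v,w}=0$ for all $1\le v,w\le m$. Hence every vertex $k\in M$ is isolated in the induced subgraph $M$. The preceding lemma was stated for the fixed point $\x^*$, but its proof used only that $\x^*\in\Tbm$ and that $k$ is isolated in $M$. It therefore applies verbatim with $\x$ in place of $\x^*$.

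Applying item i) of the lemma to each $k=1,\dots,m$ gives $\mathcal{S}^{k}_{m,p}(\x)\subset\Tbm$. Taking the union over $k$ gives $\mathcal{S}_{m,p}(\x)\subset\Tbm$. Applying item ii) of the lemma to each $v\in M$ gives $\sigma_{1,v}N_{v,1}=\sigma_{2,v}N_{v,0}$. Since $\x$ was arbitrary, alternative 2 holds.

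The only real obstacle is bookkeeping. The relabeling, and hence the blocks of \eqref{eq:adj_block}, depend on the chosen point, so one must check that the lemma's argument is pointwise. It is: for $v\in M$, the equation $f_v=0$ involves no mixed coordinates, because $a_{v,w}=0$ for every mixed $w$. One should also note, for item ii), that a mixed vertex with no mixed neighbours has only pure neighbours. This gives $d_v=N_{v,1}+N_{v,0}$, which that computation uses.
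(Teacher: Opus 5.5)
Your proof is correct and follows the paper's route: since $A_{MM}=0$ makes every mixed vertex isolated in $M$, you apply item i) of the preceding lemma to each $k$ and take the union to get $\mathcal{S}_{m,p}(\x)\subset\Tbm$, and item ii) gives $\sigma_{1,v}N_{v,1}=\sigma_{2,v}N_{v,0}$. You are also more careful than the paper's proof in two respects: you note that the lemma's argument is pointwise, and you read $A_{MM}=0$ relative to the partition induced by each $\x\in\Tbm$, which is the reading under which the dichotomy with $\Tbm=\emptyset$ is literally correct, whereas the paper argues only within a fixed $\mathcal{S}_{m,p}$.
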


\begin{proof}
    If there is no $\x \in \mathcal{S}_{m,p} $ that is boundary mixed steady state, then ${\emph i)}$ is satisfied. 
    
    If there is $\x \in \mathcal{S}_{m,p}$ that is in steady state, then the previous lemma guarantees that for all vertices $ 1 \leq k \leq m$, $\mathcal{S}^k_{m,p} (\x)\subset \Tbm $, since $A_{MM}=0$ means that all vertices are isolated in $M$. Then, by definition of $\mathcal{S}_{m,p}(\x)$ the inclusion follows. The last identity in $ii)$ also follows from the previous lemma.
\end{proof}

\begin{thm} \label{thm:main} Let $\x \in \Tbm$ be a steady state of the Evolutionary Game on Networks equation
\begin{align}
 \dot{x}_v = x_v(1-x_v)f_v(\x), \qquad v \in V. \label{eq:egn_thm}
\end{align}
Then $\x$ is non-hyperbolic or a saddle point.

In particular, if the adjacency matrix of the subgraph os mixed players satisfies $A_{MM} \neq 0$, and the quantities $\sigma_{v,1}+\sigma_{v,1}$ are nonzero and have the same sign, for all $v\in\M(\x)$, then the Jacobian matrix $J(\x)$ has at least one positive and one negative eigenvalue.
\end{thm}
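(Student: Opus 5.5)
Compute the Jacobian of \eqref{eq:egn_thm} at $\x\in\Tbm$ using the block decomposition \eqref{eq:adj_block}. Write $g_v(\x)=x_v(1-x_v)$, so that
\[
\frac{\partial \dot x_v}{\partial x_w}=g_v'(x_v)\,\delta_{vw}\,f_v(\x)+g_v(x_v)\,(\sigma_{1,v}+\sigma_{2,v})\,a_{v,w}.
\]
For a mixed player $v\le m$ we have $f_v(\x)=0$ and $g_v(x_v)>0$. The row of $v$ is therefore $g_v(x_v)(\sigma_{1,v}+\sigma_{2,v})$ times the $v$-th row of $A$.

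For a pure player $v>m$ we have $g_v(x_v)=0$ and $g_v'(x_v)=1-2x_v=\pm1$. The row of $v$ is then zero except for the diagonal entry $\lambda_v:=(1-2x_v)f_v(\x)$.

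Ordering mixed players first, the Jacobian is block upper triangular:
\[
J(\x)=\begin{pmatrix} G\Sigma A_{MM} & G\Sigma A_{MP}\\ 0 & \Lambda\end{pmatrix},
\]
with $G=\diag(g_v(x_v))_{v\le m}$, $\Sigma=\diag(\sigma_{1,v}+\sigma_{2,v})_{v\le m}$ and $\Lambda=\diag(\lambda_v)_{v>m}$. Hence the spectrum of $J(\x)$ is the union of the spectrum of $G\Sigma A_{MM}$ and the real numbers $\lambda_v$.

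For the particular claim, assume $A_{MM}\neq 0$ and that all $\sigma_{1,v}+\sigma_{2,v}$ with $v\in\M(\x)$ are nonzero with a common sign $\epsilon$. Set $D=\epsilon G\Sigma$. This is diagonal with positive entries, so by Lemma~\ref{lemma:diagonal_matrix} the eigenvalues of $DA_{MM}$ are real and have the same inertia as $A_{MM}$.

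The key observation is that $A_{MM}$ is symmetric, nonzero, and has zero diagonal (no self-loops). So $\Tr A_{MM}=0$, while $\Tr A_{MM}^2=\sum a_{v,w}^2>0$ shows it is not nilpotent. Its real eigenvalues therefore sum to zero and are not all zero, so it has at least one positive and one negative eigenvalue. Since $G\Sigma A_{MM}=\epsilon DA_{MM}$, multiplying by $\epsilon=\pm1$ preserves this. Thus $J(\x)$ has eigenvalues of both signs, which proves the second assertion.

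For the general dichotomy, split into cases.
\begin{itemize}
\item If $A_{MM}=0$, the block $G\Sigma A_{MM}$ is zero, so $J(\x)$ has $m\ge1$ zero eigenvalues and $\x$ is non-hyperbolic. This is consistent with Corollary~\ref{cor.AMM=0}.
\item If some $\sigma_{1,v}+\sigma_{2,v}=0$ for $v\in\M(\x)$, the corresponding row of $G\Sigma A_{MM}$ vanishes. This gives a zero eigenvalue, so again non-hyperbolic.
\item If all these quantities are nonzero but of mixed signs, write $G\Sigma=SD'$ with $S$ a diagonal sign matrix and $D'$ positive diagonal. Then $G\Sigma A_{MM}$ is similar to $S\widetilde A$ with $\widetilde A=D'^{1/2}A_{MM}D'^{1/2}$ symmetric, zero diagonal and nonzero. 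If $\Tr(S\widetilde A)=0$, then either all eigenvalues are zero or some real part is zero (non-hyperbolic), or eigenvalues with real parts of both signs exist (saddle). If none has zero real part, the zero trace forces the real parts to take both signs.
\end{itemize}
In all cases with $A_{MM}\ne0$ and no zero real part, there are eigenvalues with positive and with negative real part, so $\x$ is a saddle; otherwise it is non-hyperbolic.

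The expected main obstacle is this last, mixed-sign situation. There the trace argument still works directly on the real parts, but one must be careful that all eigenvalues could be purely imaginary or zero. That possibility lands in the non-hyperbolic alternative, and I would check this explicitly. The pure-player eigenvalues $\lambda_v$ only add to the count and never remove the instability.
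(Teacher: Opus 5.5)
Your proposal is correct and follows essentially the same route as the paper: you use the block upper-triangular Jacobian with mixed block $DA_{MM}$, the zero-trace argument on that block (correctly phrased via real parts, since its eigenvalues can be complex when the signs of $\sigma_{1,v}+\sigma_{2,v}$ differ), and Lemma~\ref{lemma:diagonal_matrix} for the same-sign case. The differences are minor: you use $\Tr A_{MM}^2>0$ where the paper invokes Perron--Frobenius, and your case split for the first assertion is unnecessary, because $DA_{MM}$ always has zero diagonal and the trace argument applies to it directly in every case.
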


\begin{proof}
  Given a solution ${ \bf x} \in \Tbm$ of the EGN given in \eqref{eq:egn_thm}, we compute the Jacobian $J(\x)$ of the system at $\x$ and obtain:
  \begin{equation}
  J_{vw}(\x)=   \begin{cases}
			(1-2x_v)f_v(\x) &, \text{if} \,\, v=w\\
              x_v(1-x_v)(\sigma_{1,v}+\sigma_{2,v}) a_{vw} &, \text{if} \,\, v \neq w.\\
		 \end{cases}    
  \end{equation}

Let us see the Jacobian matrix $J$ as the block matrix given by
\begin{equation}
    J(\x)=\begin{pmatrix}
    J_{MM}(\x) & J_{MP}(\x) \\
    J_{PM}(\x) & J_{PP}(\x)\\        
    \end{pmatrix}  
 \label{eq:jac_block}
\end{equation} 
and proceed to analyze these block matrices.

For $J_{PM}(\x)$, note that $v>m$ and $w \leq m$, implies that 
\[J_{vw}(\x)=x_v(1-x_v)(\sigma_{1,v}+\sigma_{2,v})a_{vw}=0,\] 
since $x_v \in \{ 0,1\}$ for $v>m$. Thus $J_{PM}(\z)$ is the zero matrix of dimension $p \times m$. 

In order to save some space, let us define $\gamma_v=x_v(1-x_v)(\sigma_{1,v}+\sigma_{2,v})$.

For $J_{MM}(\x)$ we have $v,w \leq m$ and then $f_v(\x)=0$ and 
\begin{equation}
  J_{vw}(\x)=   \begin{cases}
			0, & \text{if} \,\, v=w\\
               \gamma_v a_{vw}, & \text{if} \,\, v \neq w.
		 \end{cases}    
  \end{equation}
Thus $J_{MM}(\x)=DA_{MM}$ where $D=\diag(\gamma_v)_{v=1}^m$. 

In $J_{MP}(\x)$ we have $ v\leq m$ and $ w > m$, and therefore  
\[J_{vw}(\x)=\gamma_v a_{vw}.\] 
Since  $x_v \in (0,1)$, thus $J_{vw}(\x)$ does not need to be zero. But we can notice that $J_{MP}(\x)=DA_{MP}$ 
   
Finally, for $J_{PP}(\x)$ we have $v,w>m$ and we again have that  $x_v \in \{0,1\}$, thus 
      \begin{equation}
  J_{vw}(\x)=   \begin{cases}
			(1-2x_v)f_v(\x), & \text{if} \,\, v=w\\
              0, & \text{if} \,\, v \neq w\\
		 \end{cases}    
  \end{equation}
i.e., $J_{PP}(\x)$ is a diagonal matrix.

Therefore, the Jacobian is of the form
\begin{equation}
    J(\x)= 
    \begin{pmatrix}
        DA_{MM} & D A_{MP} \\
        0       & J_{PP}(\x) \\
    \end{pmatrix}
 \label{eq:jac_block_triangular}
\end{equation} 
and it follows that its eigenvalues are the eigenvalues of $DA_{MM}$ and $J_{PP}(\x)$. Since we have $a_{vv}=0$ for all $v$, it follows that
\[\Tr(DA_{MM})=\sum_{i=1}^N\sum_{j=1}^Nd_{ij}a_{ji} = \sum_{i=1}^N d_{ii}a_{ii}=0.\]
Therefore, the eigenvalues of $DA_{MM}$ are all zero or there must be at least two eigenvalues of oposite signs. In either case, $\x$ is non-hyperbolic or a saddle point.

Now, if for all $v\le m$, $\sigma_{1,v}+\sigma_{2,v}$ are nonzero and have all the same sign, then $\gamma_v$ has the same sign also, since $x_v\in(0,1)$.

Since the matrix $A_{MM}$ symmetric and non zero, all eigenvalues are real and Perron-Frobenius guarantees a positive eigenvalue. However since the trace of $A_{MM}$ is zero, we also must have a negative eigenvalue. Thus, Lemma \ref{lemma:diagonal_matrix} implies that the matrix $DA_{MM}$ (or $-DA_{MM}$) has the same number of positive, negative and zero eigenvalues as $A_{MM}$. Since $J_{PP}(\x)$ is diagonal, we conclude that all eigenvalues of $J(\x)$ are real, and there is at least one which is positive and one negative. So $\x$ is a saddle point.
 \end{proof}

In particular, boundary mixed equilibria are never asymptotically stable. This contrasts with the well-mixed setting, where boundary mixed steady  states may be asymptotically stable (for instance, in games such as the Hawk--Dove game). The following corollaries provide conditions under which a steady state $\x \in \Tbm$ is necessarily non-hyperbolic, and therefore cannot be a saddle point.

\begin{coro}
Let $\x \in \Tbm$ be a steady state of the EGN. If the matrix $A_{MM}$ is singular, then $\x$ is a non-hyperbolic steady state.
\end{coro}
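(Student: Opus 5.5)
The argument reuses the block upper-triangular form \eqref{eq:jac_block_triangular} of the Jacobian obtained in the proof of Theorem~\ref{thm:main}. There, after relabeling so that $\x=(\x^{\textbf{m}},\x^{\textbf{p}})\in\mathcal{S}_{m,p}$, we showed that $J_{PM}(\x)=0$ and $J_{MM}(\x)=DA_{MM}$ with $D=\diag(\gamma_v)_{v=1}^m$. Since the Jacobian is block triangular,
\[
\det\bigl(J(\x)-\lambda I\bigr)=\det\bigl(DA_{MM}-\lambda I_m\bigr)\,\det\bigl(J_{PP}(\x)-\lambda I_p\bigr).
\]
Consequently every eigenvalue of $DA_{MM}$ is an eigenvalue of $J(\x)$. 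It therefore suffices to exhibit a zero eigenvalue of $DA_{MM}$.

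If $A_{MM}$ is singular, then $\det(DA_{MM})=\det(D)\det(A_{MM})=0$, regardless of the signs or vanishing of the $\gamma_v$. Alternatively, any nonzero $\mathbf{u}\in\ker A_{MM}$ satisfies $DA_{MM}\mathbf{u}=0$. Either way, $0$ is an eigenvalue of $DA_{MM}$, and hence of $J(\x)$. The eigenvector is explicit: if $DA_{MM}\mathbf{u}=0$, then $(\mathbf{u},0)^\top$ is an eigenvector of $J(\x)$ with eigenvalue $0$, because the lower block row of $J(\x)$ is $(0,\,J_{PP}(\x))$ and so annihilates it.

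An eigenvalue with zero real part means, by definition, that $\x$ is non-hyperbolic. In particular, it cannot be a saddle point in the hyperbolic sense. This settles the corollary.

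There is no real obstacle. The only point needing care is that Lemma~\ref{lemma:diagonal_matrix} is not required here. The singularity argument goes through through the determinant alone, so no assumption on the signs of $\sigma_{1,v}+\sigma_{2,v}$ is needed. It is also worth noting that the case $A_{MM}=0$ is covered, since it is trivially singular, and this is consistent with the continua of equilibria found in Corollary~\ref{cor.AMM=0}.
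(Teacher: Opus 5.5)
Your proof is correct and follows the paper's route: you use the block upper-triangular Jacobian from Theorem~\ref{thm:main}, which reduces the question to finding a zero eigenvalue of $DA_{MM}$, and you obtain it from $\ker A_{MM}\neq\{0\}$. The one difference is that the paper assumes $D$ is invertible (which needs $\sigma_{1,v}+\sigma_{2,v}\neq 0$, a hypothesis not in the corollary) to get $\ker(DA_{MM})=\ker(A_{MM})$; you rightly note that the inclusion $\ker A_{MM}\subseteq\ker(DA_{MM})$, or equivalently $\det(DA_{MM})=\det D\,\det A_{MM}=0$, is enough, so your version holds without any condition on the payoffs.
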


\begin{proof}
From the proof of Theorem \ref{thm:main}, we have that the Jacobian matrix can be written as
\[
J(\x)=\begin{pmatrix}
DA_{MM} & DA_{MP}\\
0 & J_{PP}(\x)
\end{pmatrix},
\]
where $D=\diag(\gamma_v)_{v=1}^m$ with $\gamma_v \neq 0$ for all $v \in M$, since $x_v \in (0,1)$ and $\sigma_{1,v}+\sigma_{2,v} \neq 0$.

Since $D$ is invertible, it follows that
\[
\ker(DA_{MM}) = \ker(A_{MM}).
\]
Thus, if $A_{MM}$ is singular, then $\ker(A_{MM}) \neq \{0\}$, which implies that $0$ is an eigenvalue of $DA_{MM}$.

Since the eigenvalues of $J(\x)$ are the union of the eigenvalues of $DA_{MM}$ and $J_{PP}(\x)$, it follows that $0 \in \sigma(J(\x))$. Therefore, $\x$ is a non-hyperbolic steady state.
\end{proof}

\begin{coro}
Let $\x \in \Tbm$ be a steady state of the EGN. If $|\M| > \rank(A)$, then $\x$ is a non-hyperbolic steady state.
\end{coro}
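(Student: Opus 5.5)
The plan is to reduce this to the preceding corollary: it is enough to show that $|\M| > \rank(A)$ forces $A_{MM}$ to be singular. That corollary then gives $0 \in \sigma(J(\x))$, so $\x$ is non-hyperbolic. We keep the same standing assumptions as there, namely $\sigma_{1,v}+\sigma_{2,v}\neq 0$ for $v\in\M$, which makes $D$ invertible.

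The key step is a rank inequality for submatrices. With the players relabeled as in \eqref{eq:pm}, $A_{MM}$ is the leading $m\times m$ principal submatrix of $A$ in the block form \eqref{eq:adj_block}. (Relabeling by a permutation similarity does not change the rank of $A$.) I will use the standard fact that the rank of any submatrix is at most the rank of the whole matrix. Concretely, the columns of $A_{MM}$ are the first $m$ columns of $A$ restricted to the first $m$ rows. Any linear dependence among the first $m$ columns of $A$ remains a dependence after restricting rows. Hence
\[
\rank(A_{MM}) \le \rank\begin{pmatrix} A_{MM}\\ A_{PM}\end{pmatrix} \le \rank(A).
\]

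Conclusion: if $m=|\M|>\rank(A)$, then $\rank(A_{MM})\le \rank(A)<m$. So the $m\times m$ matrix $A_{MM}$ is singular and $\ker(A_{MM})\neq\{0\}$. By the previous corollary, $\ker(DA_{MM})=\ker(A_{MM})$ is nontrivial, so $0$ is an eigenvalue of the block $DA_{MM}$. By the block upper-triangular form \eqref{eq:jac_block_triangular}, $0$ is therefore an eigenvalue of $J(\x)$, and $\x$ is non-hyperbolic.

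There is no real obstacle here. The only care needed is to state the submatrix-rank inequality correctly and to note that the kernel argument does not actually depend on the invertibility of $D$ in the direction used: any $u\in\ker A_{MM}$ satisfies $DA_{MM}u=0$ for any diagonal $D$. So the hypothesis $\sigma_{1,v}+\sigma_{2,v}\neq0$ is not even needed for this implication.
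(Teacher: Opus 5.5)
Your proposal is correct and follows the paper's proof: both reduce to the previous corollary by showing that $|\M|>\rank(A)$ forces $A_{MM}$ to be singular. The paper uses the nonzero-minor characterization of rank, and you use the equivalent submatrix-rank inequality $\rank(A_{MM})\le\rank(A)$. Your side remark is also right: $\ker(A_{MM})\subseteq\ker(DA_{MM})$ for any diagonal $D$, so the assumption $\sigma_{1,v}+\sigma_{2,v}\neq 0$ invoked in the paper's previous corollary is not needed for this conclusion.
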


\begin{proof}
Recall that the rank of a matrix is the largest order of a nonzero minor. 
If $A_{MM}$ were invertible, then $\det(A_{MM}) \neq 0$, and therefore $A$ 
would have a nonzero minor of order $|M|$. This would imply that 
$\rank(A) \ge |M|$, which contradicts the assumption $|M| > \rank(A)$. 

Thus, $A_{MM}$ is singular, and the result follows from the previous corollary.
\end{proof}


\noindent
In the degenerate case where mixed players do not interact, the equilibrium is non-hyperbolic and may exhibit neutral behavior. Otherwise, whenever there is interaction among mixed players, the Jacobian has eigenvalues with both positive and negative real parts, implying that the equilibrium is unstable. In this case, the Jacobian may also have zero eigenvalues, so the equilibrium is not necessarily a saddle. This is illustrated in the following example.

\begin{example}
Consider the graph $\mathcal G$ given by the tree with two leaves, where vertex $1$ is the central node and vertices $2$ and $3$ are the leaves. We present three simple configurations illustrating the possible behavior of steady states and of the spectrum of the Jacobian.

Let
\[
I:=\begin{pmatrix}1&0\\0&1\end{pmatrix},
\qquad
Q:=\begin{pmatrix}3&0\\0&1\end{pmatrix}.
\]

\begin{enumerate}[a)]

    \item If $B_1=B_2=B_3=I,$ then, for every $m\in[0,1]$, $\x^*=(m,\,1,\,0)$ is a steady state, with $\sigma\bigl(J(\x^*)\bigr)=\{0,\,1-2m,\,2m-1\}$. In particular, if $m=\frac12$, then $\sigma\bigl(J(\x^*)\bigr)=\{0\}$. Also, $\x^*$ is a Nash equilibrium if and only if $m=\frac12$.

    \item If $B_1=Q,$ and $B_2=B_3=I,$ then $\x^*=\left(\frac12,\,\frac12,\,0\right)$ is a boundary mixed steady state with a zero eigenvalue and two non-zero eigenvalues. $\x^*$ is a Nash equilibrium.

    \item If $B_1=B_3=Q,$ and $B_2=I,$ then $\x^*=\left(\frac12,\,\frac12,\,0\right)$
    is a boundary mixed steady state and it is a saddle point. The point $\x^*$ in this case is not a Nash equilibrium.
\end{enumerate}

These three situations are summarized in Table~\ref{tab:star-summary}.

\begin{table}[H]
\centering
\caption{Examples of steady states for a tree with 2 leaves.}
\label{tab:star-summary}
\renewcommand{\arraystretch}{1.2}
\setlength{\tabcolsep}{10pt}
\begin{tabular}{c c c c}
\toprule
\begin{tabular}[c]{@{}c@{}}Payoff matrices\\ $(B_1,B_2,B_3)$\end{tabular}
&  \begin{tabular}[c]{@{}c@{}} Steady state $\x^*$\\ $(x_1,x_2,x_3)$\end{tabular}   & Spectrum $\sigma(J(\x^*))$ & Figure \\
\midrule

$\left(I,I,I\right)$
&
$\left(\frac12,\,1,\,0\right)$
&
$\{0,\,0,\,0\}$
&
\staricon{mixed}{coop}{defe}
\\[5ex]

$\left(Q,I,I\right)$
&
$\left(\frac12,\,\frac12,\;0\,\right)$
&
$\left\{-\frac{1}{\sqrt2},\,0,\,\frac{1}{\sqrt2}\right\}$
&
\staricon{mixed}{mixed}{defe}
\\[5ex]

$\left(Q,I,Q\right)$
&
$\left(\frac12,\,\frac12,\,0\right)$
&
$\left\{-\frac1{\sqrt2},\,\frac1{\sqrt2},\,1\right\}$
&
\staricon{mixed}{mixed}{defe}\\
\bottomrule
\end{tabular}
\end{table}

\end{example}

\section{Conclusions}

In this paper, we analyzed the structure and stability of boundary mixed steady states in evolutionary games on networks, that is, equilibria in which, in a finite set of players, some nodes correspond to pure strategies while others correspond to mixed strategies. 
We introduced the notion of boundary Nash equilibrium, a natural relaxation of the classical Nash equilibrium in which the optimality condition is imposed only on non-pure players. In the case of two strategies, we showed that this notion provides a complete characterization of boundary mixed steady states, while this equivalence fails in higher dimensions.

From a structural perspective, we identified a degenerate regime arising when mixed players do not interact. In this case, boundary mixed steady states are not isolated: whenever such an equilibrium exists, it belongs to a continuum of steady states obtained by freely varying the strategies of isolated mixed players. This reveals a loss of rigidity in the dynamics, where equilibrium configurations may form entire segments rather than discrete points.

Our main results concern the stability properties of these equilibria. Under mild assumptions on the payoff matrices, we proved that whenever the subgraph induced by mixed players is nontrivial, the Jacobian at a boundary mixed steady state necessarily admits eigenvalues with both positive and negative real parts. Consequently, such equilibria are unstable. 

Therefore, when mixed players interact, instability arises through the coexistence of expanding and contracting directions; when they do not interact sufficiently, the equilibrium becomes non-hyperbolic and belongs to a non-isolated set of steady states.

These results highlight a fundamental contrast between classical well-mixed replicator dynamics and their networked counterparts. In the well-mixed setting, the population is homogeneous and described by a single aggregate strategy, so heterogeneous configurations in which pure and mixed behaviors coexist across different agents are not allowed. In contrast, network structure naturally allows for such heterogeneous profiles, but imposes strong constraints on their stability. In this sense, network structure leaves no room for robust mixed–pure coexistence: boundary mixed equilibria can only manifest as unstable saddles or as degenerate configurations.

A natural direction for future research is to investigate whether additional mechanisms, such as self-regulation or adaptive network structures, may stabilize boundary mixed steady states. In this context, our results reveal a structural limitation of evolutionary dynamics on networks: under natural assumptions on the payoff matrices, boundary mixed steady states are never asymptotically stable, as the Jacobian generically exhibits both positive and negative directions or degeneracies. This instability is not merely a consequence of specific payoff parameters, but rather an intrinsic feature of the network structure. This perspective connects naturally with recent approaches introducing control mechanisms in evolutionary games on networks in order to stabilize the dynamics. Exogenous control, as in \cite{RiehlCao2017}, shows that a suitably chosen subset of agents can steer the system toward a desired homogeneous state. In contrast, the self-regulated EGN model in \cite{MadeoMocenni2021} introduces endogenous feedback, effectively modifying the local dynamics and enabling the stabilization of mixed consensus states, even with global convergence.

\newpage
\bibliographystyle{plainnat}
\bibliography{references}

@phdthesis{Carvalho1983,
  author      = {De Carvalho, Maria Samuel Bezerra},
  title       = {Dynamical Systems and Game Theory},
  school      = {University of Warwick},
  year        = {1983},
  address     = {Coventry, UK},
  url         = {https://wrap.warwick.ac.uk/111054/},
  note        = {Doctoral dissertation}
}

@book {Hofbauer_sigmund,
    AUTHOR = {Hofbauer, Josef and Sigmund, Karl},
     TITLE = {Evolutionary games and population dynamics},
 PUBLISHER = {Cambridge University Press, Cambridge},
      YEAR = {1998},
     PAGES = {xxviii+323},
      ISBN = {0-521-62365-0; 0-521-62570-X},
   MRCLASS = {92D25 (34C35 34D99 90-02 92-02 92D40)},
  MRNUMBER = {1635735},
MRREVIEWER = {Gabriela\ Schranz-Kirlinger},
       DOI = {10.1017/CBO9781139173179},
       URL = {https://doi.org/10.1017/CBO9781139173179},
}

@incollection{JaZe15,
title = {Chapter 3 - Games on Networks},
editor = {H. Peyton Young and Shmuel Zamir},
series = {Handbook of Game Theory with Economic Applications},
publisher = {Elsevier},
volume = {4},
pages = {95-163},
year = {2015},
issn = {1574-0005},
doi = {https://doi.org/10.1016/B978-0-444-53766-9.00003-3},
url = {https://www.sciencedirect.com/science/article/pii/B9780444537669000033},
author = {Matthew O. Jackson and Yves Zenou}
}

@article {Kitching2025,
    AUTHOR = {Kitching, Christopher R. and Ramirez, Luc\'ia S. and San Miguel, Maxi and Galla, Tobias},
     TITLE = {Breaking coexistence: zealotry vs nonlinear social impact},
   JOURNAL = {Chaos},
  FJOURNAL = {Chaos. An Interdisciplinary Journal of Nonlinear Science},
    VOLUME = {35},
      YEAR = {2025},
    NUMBER = {8},
     PAGES = {Paper No. 083133, 25},
      ISSN = {1054-1500,1089-7682},
   MRCLASS = {91A22 (37N40 91B12 91D30)},
  MRNUMBER = {4948276},
       DOI = {10.1063/5.0282676},
       URL = {https://doi.org/10.1063/5.0282676},
}

@article{MaWeFu14,
    doi = {10.1371/journal.pcbi.1003567},
    author = {Maciejewski, Wes AND Fu, Feng AND Hauert, Christoph},
    journal = {PLOS Computational Biology},
    publisher = {Public Library of Science},
    title = {Evolutionary Game Dynamics in Populations with Heterogenous Structures},
    year = {2014},
    month = {04},
    volume = {10},
    url = {https://doi.org/10.1371/journal.pcbi.1003567},
    pages = {1-16},
    number = {4},

}

@article {MaMo15,
    AUTHOR = {Madeo, Dario and Mocenni, Chiara},
     TITLE = {Game interactions and dynamics on networked populations},
   JOURNAL = {IEEE Trans. Automat. Control},
  FJOURNAL = {Institute of Electrical and Electronics Engineers.
              Transactions on Automatic Control},
    VOLUME = {60},
      YEAR = {2015},
    NUMBER = {7},
     PAGES = {1801--1810},
      ISSN = {0018-9286,1558-2523},
   MRCLASS = {91A25 (91D30)},
  MRNUMBER = {3365069},
       DOI = {10.1109/TAC.2014.2384755},
       URL = {https://doi.org/10.1109/TAC.2014.2384755},
}

@article {MadeoMocenni2021,
    AUTHOR = {Madeo, Dario and Mocenni, Chiara},
     TITLE = {Consensus towards partially cooperative strategies in
              self-regulated evolutionary games on networks},
   JOURNAL = {Games},
  FJOURNAL = {Games},
    VOLUME = {12},
      YEAR = {2021},
    NUMBER = {3},
     PAGES = {Paper No. 60, 16},
      ISSN = {2073-4336},
   MRCLASS = {91A22 (91A43)},
  MRNUMBER = {4323656},
       DOI = {10.3390/g12030060},
       URL = {https://doi.org/10.3390/g12030060},
}

@article {MoMo24,
    AUTHOR = {Mocenni, Chiara and Moraes, Jean Carlo},
     TITLE = {Pure {N}ash equilibrium and independent dominating sets in
              evolutionary games on networks},
   JOURNAL = {J. Dyn. Games},
  FJOURNAL = {Journal of Dynamics and Games},
    VOLUME = {11},
      YEAR = {2024},
    NUMBER = {3},
     PAGES = {280--294},
      ISSN = {2164-6066,2164-6074},
   MRCLASS = {91A22 (91A43)},
  MRNUMBER = {4737069},
       DOI = {10.3934/jdg.2023027},
       URL = {https://doi.org/10.3934/jdg.2023027},
}

@article{ON2006,
  author  = {Ohtsuki, Hisashi and Hauert, Christoph and Lieberman, Erez and Nowak, Martin A.},
  title   = {A simple rule for the evolution of cooperation on graphs and social networks},
  journal = {Nature},
  year    = {2006},
  month   = {May},
  volume  = {441},
  number  = {7092},
  pages   = {502--505},
  doi     = {10.1038/nature04605},
  issn    = {1476-4687},
  url     = {https://doi.org/10.1038/nature04605}
}

@article {OhNo06,
    AUTHOR = {Ohtsuki, Hisashi and Nowak, Martin A.},
     TITLE = {The replicator equation on graphs},
   JOURNAL = {J. Theoret. Biol.},
  FJOURNAL = {Journal of Theoretical Biology},
    VOLUME = {243},
      YEAR = {2006},
    NUMBER = {1},
     PAGES = {86--97},
      ISSN = {0022-5193,1095-8541},
   MRCLASS = {92D15 (05C90 91A22 91A43)},
  MRNUMBER = {2279323},
       DOI = {10.1016/j.jtbi.2006.06.004},
       URL = {https://doi.org/10.1016/j.jtbi.2006.06.004},
}

@article {RiehlCao2017,
    AUTHOR = {Riehl, James R. and Cao, Ming},
     TITLE = {Towards optimal control of evolutionary games on networks},
   JOURNAL = {IEEE Trans. Automat. Control},
  FJOURNAL = {Institute of Electrical and Electronics Engineers.
              Transactions on Automatic Control},
    VOLUME = {62},
      YEAR = {2017},
    NUMBER = {1},
     PAGES = {458--462},
      ISSN = {0018-9286,1558-2523},
   MRCLASS = {91A22},
  MRNUMBER = {3598029},
       DOI = {10.1109/TAC.2016.2558290},
       URL = {https://doi.org/10.1109/TAC.2016.2558290},
}

@article{SmPr73,
  author  = {Maynard Smith, J. and Price, G. R.},
  title   = {The Logic of Animal Conflict},
  journal = {Nature},
  year    = {1973},
  volume  = {246},
  number  = {5427},
  pages   = {15--18},
  doi     = {10.1038/246015a0},
  url     = {https://doi.org/10.1038/246015a0}
}

@book{smith_1982,
  author    = {Maynard Smith, John},
  title     = {Evolution and the Theory of Games},
  publisher = {Cambridge University Press},
  year      = {1982},
  address   = {Cambridge},
  isbn      = {9780521286923},
  doi       = {10.1017/CBO9780511806292}
}

@article {Szolnoki2016,
    AUTHOR = {Szolnoki, Attila and Perc, Matja\v z},
     TITLE = {Zealots tame oscillations in the spatial rock-paper-scissors
              game},
   JOURNAL = {Phys. Rev. E},
  FJOURNAL = {Physical Review E},
    VOLUME = {93},
      YEAR = {2016},
    NUMBER = {6},
     PAGES = {062307, 6},
      ISSN = {2470-0045,2470-0053},
   MRCLASS = {91A22},
  MRNUMBER = {3714380},
       DOI = {10.1103/physreve.93.062307},
       URL = {https://doi.org/10.1103/physreve.93.062307},
}

@Article{TAJO78,
  author  = {Taylor, P. D. and Jonker, L. B.},
  title   = {Evolutionary stable strategies and game dynamics},
  journal = {Mathematical Biosciences},
  year    = {1978},
  volume  = {40},
  pages   = {145--156},
  doi     = {10.1016/0025-5564(78)90077-9},
  url     = {https://doi.org/10.1016/0025-5564(78)90077-9}
}

@InProceedings{Zeeman1980,
author="Zeeman, E. C.",
editor="Nitecki, Zbigniew
and Robinson, Clark",
title="Population dynamics from game theory",
booktitle="Global Theory of Dynamical Systems",
year="1980",
publisher="Springer Berlin Heidelberg",
address="Berlin, Heidelberg",
pages="471--497",
isbn="978-3-540-38312-3",
DOI = {10.1063/5.0282676},
URL = {https://doi.org/10.1063/5.0282676},
}

@book{weibull1995,
  author    = {Weibull, J{\"o}rgen W.},
  title     = {Evolutionary Game Theory},
  publisher = {MIT Press},
  year      = {1995},
  address   = {Cambridge, MA},
  isbn      = {9780262231817},
  url       = {https://mitpress.mit.edu/9780262231817/evolutionary-game-theory/}
}

\end{document}